\newtheorem{theorem}{Theorem}[section]
\newtheorem{lemma}[theorem]{Lemma}
\newtheorem{proposition}[theorem]{Proposition}
\newtheorem{corollary}[theorem]{Corollary}
\newtheorem{remark}[theorem]{Remark}
\numberwithin{equation}{section}
\newcommand{\Q}{{\mathbb{Q}}}
\newcommand{\N}{\mathds{N}}
\newcommand{\OL}{\mathcal{O}}
\begin{document}
\title[On quaternion algebras over the dihedral number  fields ...]
{On quaternion algebras over  some extensions of quadratic  number fields}

\author{Vincenzo Acciaro}
\address{Vincenzo Acciaro, Dipartimento di Economia, Universit\`a di Chieti--Pescara,
Viale della Pineta, 4, 65127 Pescara, Italy}
\email{v.acciaro@unich.it}

\author{Diana Savin}
\address{Diana Savin, Faculty of Mathematics and Computer Science, Ovidius University,
Bd. Mamaia 124, 900527, Constanta, Romania}
\email{savin.diana@univ-ovidius.ro}

\author{Mohammed Taous}
\address{Mohammed Taous, Department of  Mathematics, Faculty of  Sciences and Technology, Moulay Ismail University, Errachidia, Morocco.}
\email{taousm@hotmail.com}

\author{Abdelkader Zekhnini}
\address{Abdelkader Zekhnini, Mohammed First University, Pluridisciplinary Faculty of Nador, Department Mathematics and Informatics, Nador, Morocco}
\email{zekha1@yahoo.fr}

\subjclass[2000]{Primary  11R11; 11R21 ; 11R32, 11R52; 11S15; Secondary  11R37, 11R29, 11A41, 11R04, 11F85}
\keywords{Quaternion algebras, quadratic fields,  dihedral extension, Hilbert class field }
\date{}
\begin{abstract}
Let $p$ and $q$ be two positive primes. Let $\ell$ be an odd positive prime integer and $F$  a quadratic number field. Let $K$ be an extension of $F$ such that $K$ is a dihedral extension of $\Q$ of degree $\ell$ over $F$  or $K$ is an abelian $\ell$-extension unramified over $F$ assuming $\ell$ divides the class number of  $F$. In this paper, we obtain a complete characterization of
division quaternion algebras $H_{K}(p, q)$ over   $K$.
\end{abstract}
\maketitle
\section{Introduction}
\noindent
Let $F$ be a field with $char(F)\neq2$ and let $a, b \in F \backslash \{0\}$.
{The   generalized quaternion algebra $H_{F}(a, b)$
 is the associative algebra generated over the field $F$ by two elements $i$ and $j$,  subject to the relations
$i^2 = a$, $j^2 = b$ and $ij = -ji$.

Quaternion algebras turn out to be  central simple algebras  of dimension $4$ over $F$. A basis for $H_{F}(a, b)$
over $F$  is given by $\{1,i,j,ij\}$.

It can be shown that every four dimensional central simple algebra over a field of characteristic $\neq 2$ is a quaternion algebra.

If  $x=x_{1}  1+ x_{2}i+x_{3}j+x_{4}k \in H_{F}(a, b),$
with $x_{i}\in F$,       the conjugate $\overline{x}$ of $x$ is defined as
$\overline{x}=x_{1}   1  -x_{2}i -x_{3}j-x_{4}k$, and the norm of $x$ as $\boldsymbol{n}(
x) =x  \overline{x}=x_{1}^{2}-a x_{2}^{2}-b x_{3}^{2}+ ab x_{4}^{2}.$


If the equations $ax=b,\ ya=b$   have unique solutions    for all  $a, b \in A, \ a\neq 0$,
then the algebra $A$ is called  {\em a division algebra}.  If $A$ is a
finite-dimensional algebra, then $A$ is a division algebra if and only if $A$
has no zero divisors.
In the case of  generalized quaternion algebras  there is a simple criterion that guarantees them to be division algebras:   $H_{F}  (a,b)$ is a division algebra if and only if there is a unique element of zero norm, namely $x = 0$.

Let $L$ be an extension field of $F$, and let $A$ be a central simple algebra over  $F$.
We recall that $A$ is said to   {split} over $L$,
and $L$ is called
a   {splitting field} for $A$,
if  $A\otimes _{F} L$ is isomorphic to a  full
matrix algebra over $L$.

Several results are known about   the splitting behavior of quaternion algebras and symbol algebras over specific fields
\cite{chapman 2016, chapman 2017, gille, lam, Rowen}.

Explicit conditions which guarantee that a quaternion algebra  splits over the field of rationals numbers,
or else is a division algebra, were studied in  \cite{alsina}.

In   \cite{astz} we studied the splitting behavior of some quaternion algebras over quadratic  fields.
Then, in   \cite{astz2}  we extended the previous results to
the composite of quadratic number fields.

Traditionally, in order to decide whether a quaternion algebra is a division algebra or it splits,
 one either looks  for the primes which ramify in the algebra or otherwise one has to appeal to  Hasse's norm theorem which allows
to reduce the problem to   computations of  local Hilbert symbols  \cite{Serre,vostokov}.
Very recently,  Goldstein  \cite{goldstein}  tried to combine these two techniques.

In this paper, as we did in our previous papers \cite{astz,astz2},
we adopt the former approach, i.e we study the ramification
 of   certain integral primes,  and we obtain a nice characterization of quaternion division algebras
 $H_{K}(p, q)$  solely in terms of quadratic residues, assuming that $p$ and $q$ are  positive primes
and $K$ is  a  dihedral extension of $\Q$ of prime degree over an imaginary quadratic field. The layout of the paper is the following.
In Section \ref{pre}  we state some preliminary results    which we will need   later.
In Section \ref{dihedral} we apply these results  to study quaternion algebras over
dihedral extensions $K$ of $\Q$ of
prime degree $l$ over an imaginary quadratic field.

\section{  Preliminary results }
\label{pre}
In this section we recall some basic   results concerning quaternion algebras.
Unless otherwise stated, when we say ``prime integer" we mean ``positive prime integer".

Let $K$ be a number field  and let $\mathcal{O}_{K}$ be its ring of integers.
If $v$ is a place of $K$, let us denote by $K_v$ the completion of $K$ at $v$.
We recall that a quaternion algebra $H_{K}(a,b)$ is said to ramify
 at a place $v$ of $K$ - or $v$ is said to ramify in $H_{K}(a,b)$ -  if the  quaternion $K_{v}$-algebra
 $H_{v}=K_{v}\otimes H_{K} (a,b )$
is a division algebra.
This happens  exactly when the Hilbert symbol
 $(a, b)_{v}$ is equal to $-1$, i.e. when the equation $ax^2+by^2=1$ has no solutions in $K_v$.
We also recall that the reduced discriminant $D_{H_{K}(a,b)}$ of the quaternion algebra  $H_{K}(a,b)$
is defined as the product of those prime ideals of the ring of integers $\mathcal{O}_{K}$ of $K$
which  ramify in $H_{K} (a,b )$.
The following splitting criterion for a quaternion algebras is well  known \cite[Corollary 1.10]{alsina}:
\begin{proposition}
\label{twodotfour}
 Let $K$   be a number field. Then, the quaternion algebra ${H}_{K}(a,b) $  is split if and only if
 its discriminant
  $D_{H_{K}(a,b)}$ is equal to  the ring of integers $ \mathcal{O}_{K}$ of $K$.
\end{proposition}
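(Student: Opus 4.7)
The plan is to invoke the Albert--Brauer--Hasse--Noether local-global principle for central simple algebras over a number field, which asserts that the localization map $\mathrm{Br}(K)\hookrightarrow \bigoplus_v \mathrm{Br}(K_v)$ (the sum running over all places of $K$) is injective. Thus $H_K(a,b)$ is split if and only if $H_v := K_v \otimes_K H_K(a,b)$ is split for every place $v$ of $K$. Each $H_v$ is a four-dimensional central simple algebra over a local field, hence is either isomorphic to $M_2(K_v)$ or to the unique quaternion division algebra over $K_v$; equivalently, $H_K(a,b)$ is split if and only if no place of $K$ ramifies in $H_K(a,b)$, i.e.\ $(a,b)_v = 1$ for every $v$.

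Next I would translate this criterion into the language of the reduced discriminant. By construction $D_{H_K(a,b)}$ is the product of the prime ideals $\mathfrak{p}\subset \OL_K$ at which $H_K(a,b)$ ramifies, so the identity $D_{H_K(a,b)} = \OL_K$ amounts to the empty product, that is, to the statement that no finite place of $K$ ramifies in $H_K(a,b)$. The forward implication is then immediate: if $H_K(a,b)$ is split, every place and in particular every finite prime is unramified, so $D_{H_K(a,b)} = \OL_K$.

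The reverse implication is where the main obstacle lies, because the discriminant only encodes finite-prime data and one must still rule out archimedean ramification. The standard device is Hilbert reciprocity, $\prod_v (a,b)_v = 1$: if every finite place contributes $+1$, then the number of archimedean places contributing $-1$ is necessarily even. In the setting of this paper (see Section~\ref{dihedral}), $K$ is a Galois extension of $\Q$ containing an imaginary quadratic field $F$; any such Galois extension is totally complex, because a real embedding of $K$ would restrict to a real embedding of $F$, which does not exist. Consequently no archimedean place of $K$ can ramify, the even count is forced to be $0$, and $(a,b)_v = 1$ at every place of $K$. Invoking the local-global principle from the first step then closes the equivalence.
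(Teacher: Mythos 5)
Your forward direction and your diagnosis of where the difficulty lies are exactly right: since $D_{H_K(a,b)}$ records only the finite ramified primes, the entire content of the reverse implication is to exclude ramification at the real places, and Hilbert reciprocity by itself only gives parity. But at that point your argument quietly changes the theorem: to force the even count to be zero you import the hypothesis that $K$ contains an imaginary quadratic field, taken from Section~\ref{dihedral}, whereas the proposition is stated for an \emph{arbitrary} number field $K$ (and even in Section~\ref{dihedral} the paper allows $F=\Q(\sqrt d)$ with $d>0$, in which case $K$ can be totally real --- for instance the Hilbert class field of a real quadratic field of class number $3$). So what you prove is a restricted statement, not the one asserted. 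In fact no argument can close this gap in the stated generality, because the statement is false for general $K$: take $K=\Q(\sqrt 2)$ and $H_K(-1,-1)$. Both real places ramify, while no finite place does (odd places are unramified since $-1$ is a unit, and the unique prime above $2$ is then forced to be unramified by parity, or directly because every quadratic extension of $\Q_2$ splits the division quaternion algebra over $\Q_2$); hence $D_{H_K(-1,-1)}=\mathcal{O}_K$ although $H_K(-1,-1)$ is a division algebra. The correct general criterion is ``split if and only if unramified at every place, archimedean ones included''; the discriminant form of it needs an extra hypothesis, e.g.\ that $K$ has at most one real embedding (which covers $\Q$, where your parity argument alone suffices), or that $a,b>0$ (as in this paper's applications, where $a=p$, $b=q$ are positive primes, so real places can never ramify).

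For comparison, the paper gives no proof at all: it cites Alsina--Bayer, Corollary 1.10, which is formulated over $\Q$, where the result is true, and then states the result for arbitrary $K$ without comment. So the obstruction you ran into is not an artifact of your method --- the Albert--Brauer--Hasse--Noether plus reciprocity route is the right machinery --- but your proof as written establishes a weaker proposition than the one printed, and the printed one needs to be corrected (restrict to $\Q$, or add ``unramified at all places'', or add a hypothesis on $K$ or on the signs of $a,b$). With any such amendment your argument goes through essentially verbatim.
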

If $\mathcal{O}_{K}$ is a principal ideal domain, then we may identify the ideals
of $\mathcal{O}_{K}$ with their generators, up to units.
Thus, in a quaternion algebra $H$ over $\mathbb{Q}$, the element $D_{H}$ turns out to be an integer, and $H$  is split if and only if $D_{H}=1$.

The next proposition gives us a geometric interpretation of splitting \cite[Proposition 1.3.2]{gille}:
\begin{proposition}
\label{twodotfive}
Let  $K$   be a field. Then, the quaternion algebra  ${H}_{K} (a,b ) $ is split if
and only if the
conic   $C(a, b) :$ $ax^{2}+by^{2}=z^{2}$
 has a rational point in $K$,  i.e.   there are $x_{0},y_{0},z_{0}\in K$
 such that $ax_{0}^{2}+by_{0}^{2}=z_{0}^{2}$.
\end{proposition}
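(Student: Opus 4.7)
The plan is to prove the equivalence by a short direct argument in both directions, relying on the criterion (recalled in the introduction) that the four-dimensional central simple algebra $H_K(a,b)$ is a division algebra if and only if $x=0$ is its unique element of zero norm; consequently, being split is equivalent to the existence of a nonzero quaternion $q$ with $\boldsymbol{n}(q)=0$. Throughout, ``rational point'' is interpreted as a nontrivial triple $(x_0,y_0,z_0)\neq(0,0,0)$, which is the only nondegenerate interpretation of a point on the conic.

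For the implication $(\Leftarrow)$, I would suppose $(x_0,y_0,z_0)\in K^{3}\setminus\{(0,0,0)\}$ satisfies $ax_0^{2}+by_0^{2}=z_0^{2}$, set $q:=z_0+x_0 i+y_0 j\in H_K(a,b)$, note that $q\neq 0$ since the triple is nontrivial, and compute $\boldsymbol{n}(q)=z_0^{2}-ax_0^{2}-by_0^{2}=0$. Thus $H_K(a,b)$ admits a nonzero element of zero norm and is therefore not a division algebra; since $H_K(a,b)$ is a four-dimensional central simple $K$-algebra, it must be split.

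For $(\Rightarrow)$, assume $H_K(a,b)$ is split. Then by the same criterion there exists $q=x_1+x_2 i+x_3 j+x_4 k\neq 0$ with
\[
0=\boldsymbol{n}(q)=(x_1^{2}-ax_2^{2})-b(x_3^{2}-ax_4^{2}).
\]
Writing $\alpha=x_1^{2}-ax_2^{2}$ and $\beta=x_3^{2}-ax_4^{2}$, we have $\alpha=b\beta$. The generic case is $\beta\neq 0$; here I would invoke the classical multiplicativity identity
\[
(x_1^{2}-ax_2^{2})(x_3^{2}-ax_4^{2})=(x_1x_3+ax_2x_4)^{2}-a(x_1x_4+x_2x_3)^{2},
\]
which combined with $\alpha=b\beta$ and division by $\beta^{2}$ produces the rational point
\[
\left(\frac{x_1x_4+x_2x_3}{\beta},\; 1,\; \frac{x_1x_3+ax_2x_4}{\beta}\right)
\]
of $C(a,b)$.

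The main (and only) subtlety is the degenerate case $\beta=0$: then $\alpha=0$ too, and since $q\neq 0$ at least one of $x_2,x_4$ must be nonzero (otherwise $x_1^{2}=x_3^{2}=0$ would force $q=0$). From $x_1^{2}=ax_2^{2}$ or $x_3^{2}=ax_4^{2}$ it follows that $a=c^{2}$ is a square in $K^{\times}$, and then $(1,0,c)$ is visibly a point of $C(a,b)$ since $a\cdot 1+b\cdot 0=c^{2}$. This disposes of the last case and completes the proof.
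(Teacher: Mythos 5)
Your proof is correct, and it is worth noting that the paper itself offers no proof of this statement: Proposition \ref{twodotfive} is simply quoted from \cite{gille} (Proposition 1.3.2 there). Your reading of ``rational point'' as a nontrivial triple is the right one (with the all-zero triple allowed the statement would be vacuous), and the two facts you lean on are both available in the paper: the criterion that $H_K(a,b)$ is a division algebra iff $0$ is its only element of zero norm (recalled in the introduction), and the split-or-division dichotomy of Proposition \ref{twodoteightone}. Your $(\Leftarrow)$ direction, taking $q=z_0+x_0i+y_0j$ with $\boldsymbol{n}(q)=z_0^2-ax_0^2-by_0^2=0$, is exactly right, and in $(\Rightarrow)$ the Brahmagupta identity $(x_1^2-ax_2^2)(x_3^2-ax_4^2)=(x_1x_3+ax_2x_4)^2-a(x_1x_4+x_2x_3)^2$ does check out, so dividing $b\beta^2=(x_1x_3+ax_2x_4)^2-a(x_1x_4+x_2x_3)^2$ by $\beta^2$ gives the point $\bigl(\tfrac{x_1x_4+x_2x_3}{\beta},1,\tfrac{x_1x_3+ax_2x_4}{\beta}\bigr)$; your treatment of the degenerate case $\beta=0$ (forcing $a$ to be a nonzero square, whence $(1,0,c)$ lies on the conic) closes the gap that is most often overlooked. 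This route differs from the standard textbook argument behind the citation, which deduces the conic criterion from Proposition \ref{twodotsix} (splitting iff $a\in N_{K(\sqrt{b})/K}(K(\sqrt{b})^\times)$), treating the case where $b$ is a square separately: there the point on $C(a,b)$ comes from writing $a=z^2-by^2$. The trade-off is that your argument is elementary and self-contained, working directly with an isotropic vector of the quaternary norm form and the multiplicativity of the binary form $x^2-ay^2$, while the norm-criterion route is the one that generalizes (to cyclic algebras, Severi--Brauer varieties, and the local computations the paper uses elsewhere).
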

The next proposition due to Hasse relates the norm group of extensions of the base
 field to the splitting behavior of a quaternion  algebra  \cite[ Proposition 1.1.7]{gille}:
\begin{proposition}
\label{twodotsix}
Let   $F$   be a field. Then, the quaternion algebra
${H}_{F} (a,b ) $    is split if and only if   $a$ is the norm of an element of $F (\sqrt{b} )$.
 \end{proposition}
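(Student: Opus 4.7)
The plan is to reduce the statement to the conic criterion of Proposition~\ref{twodotfive} and rewrite the resulting condition in terms of the norm of the quadratic extension $L:=F(\sqrt{b})$. The entire argument hinges on the elementary factorisation
\[
z^{2}-by^{2} \;=\; (z-y\sqrt{b})(z+y\sqrt{b}) \;=\; N_{L/F}\bigl(z+y\sqrt{b}\bigr),
\]
which holds for every $y,z\in F$ and remains correct when $b$ happens to be a square in $F$ (then $L=F$ and the ``norm'' degenerates to ordinary multiplication in $F$).

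For the forward direction, I would assume $H_F(a,b)$ is split. By Proposition~\ref{twodotfive} there exist $x_0,y_0,z_0\in F$, not all zero, satisfying $ax_0^2+by_0^2=z_0^2$. Rewritten via the identity above, this becomes $ax_0^2=N_{L/F}(z_0+y_0\sqrt{b})$. If $x_0\neq 0$, dividing by $x_0^2$ and invoking multiplicativity of the norm produces $a=N_{L/F}\bigl((z_0+y_0\sqrt{b})/x_0\bigr)$, so $a$ lies in $N_{L/F}(L^{\times})$. The only case requiring separate attention is $x_0=0$: then $by_0^2=z_0^2$ with $(y_0,z_0)\neq(0,0)$ forces $b$ to be a square in $F$, so $L=F$ and every element of $F$---in particular $a$---is trivially a norm.

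The reverse direction is immediate: if $a=N_{L/F}(u+v\sqrt{b})=u^2-bv^2$ for some $u,v\in F$, then the triple $(1,v,u)$ satisfies $a\cdot 1^{2}+bv^{2}=u^{2}$, so the conic $ax^2+by^2=z^2$ admits a rational point and Proposition~\ref{twodotfive} delivers splitting. No step is genuinely hard; the only point that demands care is the degenerate possibility $x_0=0$ in the forward argument, which is handled by observing that it forces $b$ into $F^{\times 2}$ and thereby trivialises the conclusion. The conceptual heart of the proposition is thus the decomposition of the quaternion norm form $x_1^2-ax_2^2-bx_3^2+abx_4^2=N_{L/F}(x_1+x_3\sqrt{b})-a\,N_{L/F}(x_2+x_4\sqrt{b})$ implicit in the key identity.
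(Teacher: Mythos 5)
Your proof is correct, but it is not the paper's route: the paper gives no argument for Proposition \ref{twodotsix} at all, quoting it from Gille--Szamuely (Proposition 1.1.7), where the norm criterion is established directly from the structure of the algebra and its norm form, and the conic criterion of Proposition \ref{twodotfive} is then derived from it. You invert this logical order: taking Proposition \ref{twodotfive} as given, you deduce the norm criterion from the factorisation $z^{2}-by^{2}=N_{F(\sqrt{b})/F}(z+y\sqrt{b})$, with the degenerate case $x_{0}=0$ forcing $b\in F^{\times2}$ and trivialising the claim. Within this paper, where both propositions are imported facts, this is a legitimate and pleasantly short derivation; its only cost is that, as a self-contained development, it leans on the conic criterion, which in the cited source is itself a consequence of the statement being proved, so it could not replace the textbook proof without an independent proof of Proposition \ref{twodotfive}. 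One small imprecision worth fixing: your parenthetical claim that the key identity ``remains correct'' when $b$ is a square is not literally true, since then $L=F$ and $N_{L/F}$ is the identity map, not the product of conjugates; this is harmless, because if $b=c^{2}$ with $c\in F$ both sides of the equivalence hold trivially (the point $(0,1,c)$ lies on the conic, and every element of $F$ is its own norm), which is exactly the observation you use in the $x_{0}=0$ case, but the sentence should be reworded rather than asserted as an identity.
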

For quaternion algebras it is true the following \cite[Proposition 1.1.7]{gille}:
\begin{proposition}
\label{twodoteightone}
Let   $K$   be a field with   char $K \neq 2$   and let  $a, b \in K \backslash \{  0 \}$.
 Then the quaternion algebra $H_{K} ( a, b )$   is either split or a division algebra.
\end{proposition}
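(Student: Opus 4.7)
The plan is to invoke the Artin--Wedderburn structure theorem for finite-dimensional central simple algebras. The preceding material has already recorded that $H_K(a,b)$ is a central simple $K$-algebra of dimension $4$; the hypothesis $\mathrm{char}\,K\neq 2$ is precisely what prevents the presentation from degenerating (as it would in characteristic $2$, where $ij=-ji$ reads $ij=ji$). Wedderburn's theorem then yields a $K$-algebra isomorphism $H_K(a,b)\cong M_n(D)$, where $D$ is a finite-dimensional division algebra with center $K$ and $n\geq 1$.

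Next I would count $K$-dimensions: $4=n^2[D:K]$. Since $n$ and $[D:K]$ are positive integers, the only admissible factorizations are $(n,[D:K])=(1,4)$ or $(2,1)$. In the former case, $H_K(a,b)\cong D$ is itself a division algebra; in the latter, $D=K$ and $H_K(a,b)\cong M_2(K)$, which is split in the sense of the definition recalled earlier (take $L=K$). The two cases are mutually exclusive, since $M_2(K)$ contains nonzero zero divisors (for instance the matrix units $E_{11}$, $E_{22}$ with $E_{11}E_{22}=0$), whereas a division algebra by definition has none.

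The main obstacle, if one wishes to be self-contained, is establishing central simplicity of $H_K(a,b)$; this is classical but not entirely trivial and is typically verified by examining two-sided ideals with respect to the explicit basis $\{1,i,j,ij\}$ and the multiplication rules $i^2=a$, $j^2=b$, $ij=-ji$. A more elementary alternative, which sidesteps Wedderburn altogether, is to exploit the norm form: one shows that $H_K(a,b)$ is a division algebra precisely when $\boldsymbol{n}(x)=x\overline{x}$ is anisotropic, and that any nonzero element of zero norm supplies a nontrivial idempotent or nilpotent which, through the regular representation, produces an explicit isomorphism $H_K(a,b)\cong M_2(K)$. This version has the added merit of dovetailing directly with Proposition~\ref{twodotfive}, since the conic $ax^2+by^2=z^2$ has a $K$-rational point exactly when the norm form is isotropic. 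Either route delivers the dichotomy with essentially no calculation.
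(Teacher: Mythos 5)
Your argument is correct, but note that the paper itself supplies no proof of this statement: it is quoted verbatim from Gille--Szamuely (Proposition 1.1.7), and the proof there is essentially what you relegate to your ``more elementary alternative'' --- one works with the norm form $\boldsymbol{n}(x)=x\overline{x}$ and the associated conic, showing that $H_K(a,b)$ is a division algebra exactly when the norm form is anisotropic, and constructing an explicit isomorphism with $M_2(K)$ (via $a$ being a norm from $K(\sqrt{b})$) when it is not. Your primary route is different: you take as input the fact, recorded in the paper's introduction, that $H_K(a,b)$ is a central simple $K$-algebra of dimension $4$, invoke Artin--Wedderburn to write it as $M_n(D)$ with $D$ a central division algebra, and conclude from $4=n^2[D:K]$ that the only possibilities are $D$ itself ($n=1$) or $M_2(K)$ ($n=2$), the two cases being distinguished by the presence of zero divisors. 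This is a clean and complete argument provided central simplicity is granted, and it generalizes immediately to central simple algebras of dimension $p^2$; its cost is that it leans on the Wedderburn structure theorem and on the unproved central simplicity of $H_K(a,b)$, which you rightly flag. The norm-form route is longer to write out in full but is self-contained and meshes better with the surrounding results of the paper (Propositions \ref{twodotfive} and \ref{twodotsix}), which is presumably why the cited source proceeds that way. Either way the dichotomy is established, so there is no gap of substance in your proposal.
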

In particular, this tells us that a quaternion algebra $H_\Q (a, b )$  is a division algebra if and only if there is   a prime
$p$ such that $p | D_{H_\Q  (a, b )}$.

It is known   \cite{kohel} that  if a prime integer $p$ divides  $D_{H(a, b)}$ then it must divide  $2ab$,
hence we may restrict our attention to these primes.
In other words, in order to obtain a sufficient condition for a quaternion algebra
$H_{\mathbb{Q}(\sqrt{d})}(p,q)$   to be a division algebra,
 it is important to study the ramification of the primes   $ 2,p,q $     in the   algebra    $H_\Q(p, q)$.
The following lemma from the classical book by Alsina  \cite[Lemma 1.21]{alsina} gives us a hint:
\begin{lemma}
\label{threedotfour}
Let  $p$ and $q$ be two primes, and let $H_{\mathbb{Q}}(p,q)$ be a quaternion algebra of discriminant $D_H$.
\begin{enumerate}[\rm i.]
\item
if $p\equiv q\equiv 3 \pmod4$   {and} $(\frac{q}{p})\neq 1$,   {then} $D_{H}=2p$;
 \item
if $q=2$ and  $p\equiv 3 \pmod8$,  {then} $D_{H}=pq=2p$;
\item
if  $p$   {or} $q\equiv1 \pmod4$ , with  $p\neq q$  {and} $(\frac{p}{q})=-1$,   then  $D_{H}=pq$.
\end{enumerate}
\end{lemma}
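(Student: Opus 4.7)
The plan is to compute the Hilbert symbol $(p,q)_v$ at every place $v$ of $\Q$ and read off the discriminant as the product of those primes at which the symbol equals $-1$. By the comment preceding the lemma, only the places dividing $2pq$ and the archimedean place can contribute, and since $p,q>0$ the equation $px^2+qy^2=z^2$ is trivially solvable over $\R$, so $(p,q)_\infty=+1$. Thus the task reduces to evaluating $(p,q)_\ell$ for $\ell\in\{2,p,q\}$ (or $\{2,p\}$ when $q=2$), and then invoking Proposition~\ref{twodotfour} together with Hilbert reciprocity (which guarantees an even number of ramified places, a useful consistency check).

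The computational toolkit I would use consists of the standard formulas. For an odd prime $\ell$ and $a=\ell^\alpha u$, $b=\ell^\beta v$ with $u,v\in\Z_\ell^\times$,
\[
(a,b)_\ell \;=\; (-1)^{\alpha\beta\,\epsilon(\ell)}\!\left(\tfrac{u}{\ell}\right)^{\!\beta}\!\left(\tfrac{v}{\ell}\right)^{\!\alpha},\qquad \epsilon(\ell)=\tfrac{\ell-1}{2},
\]
together with the dyadic formula, for $a=2^\alpha u$, $b=2^\beta v$ with $u,v$ odd,
\[
(a,b)_2 \;=\; (-1)^{\epsilon(u)\epsilon(v)+\alpha\,\omega(v)+\beta\,\omega(u)},\qquad \omega(u)=\tfrac{u^2-1}{8}\bmod 2,
\]
and quadratic reciprocity in the sharp form $(p/q)(q/p)=(-1)^{\epsilon(p)\epsilon(q)}$.

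Then each case is a short substitution. In (i) the hypothesis $(q/p)\neq 1$ gives $(p,q)_p=(q/p)=-1$; quadratic reciprocity combined with $\epsilon(p)=\epsilon(q)=1$ yields $(p/q)=-(q/p)=+1$, so $(p,q)_q=+1$; and $(p,q)_2=(-1)^{\epsilon(p)\epsilon(q)}=-1$, giving $D_H=2p$. In (ii) we have $q=2$: at $p$ the formula reduces to $(p,2)_p=(2/p)$, which is $-1$ because $p\equiv 3\pmod 8$; at $2$ the dyadic formula reduces to $(-1)^{\omega(p)}$, and $p\equiv 3\pmod 8$ makes $\omega(p)=1$, so $(p,2)_2=-1$, giving $D_H=2p$. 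In (iii), after relabelling so that $p\equiv 1\pmod 4$, quadratic reciprocity with $\epsilon(p)=0$ gives $(q/p)=(p/q)=-1$, hence both $(p,q)_p$ and $(p,q)_q$ equal $-1$, while $(p,q)_2=(-1)^{\epsilon(p)\epsilon(q)}=+1$, so $D_H=pq$.

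The only genuinely delicate step is the dyadic computation, especially in case (ii) where $q=2$ forces the $\omega$-term into play; the bookkeeping of $\epsilon$ versus $\omega$ is where mistakes are easy to make. Everything else is a mechanical unwinding of the odd-residue formula and a single application of quadratic reciprocity, and the parity check provided by Hilbert reciprocity — exactly two ramified places in each of (i), (ii), (iii) — is a convenient safeguard against sign errors.
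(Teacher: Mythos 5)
Your computation is correct, but note that the paper does not actually prove this lemma: it is quoted verbatim from Alsina--Bayer \cite[Lemma 1.21]{alsina} and used as a black box. What you have written is therefore a self-contained verification rather than a parallel of an argument in the text, and it is the standard one: evaluate the Hilbert symbol $(p,q)_v$ at the only possibly ramified places $v\in\{2,p,q,\infty\}$ using Serre's formulas for the tame and dyadic symbols plus quadratic reciprocity, check $(p,q)_\infty=+1$ since $p,q>0$, and read off $D_H$ from Proposition \ref{twodotfour}; your case-by-case signs ($\{2,p\}$ ramified in (i) and (ii), $\{p,q\}$ in (iii)) are all right, and Hilbert reciprocity confirms the parity. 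This is in fact the same technique the authors themselves employ elsewhere (in a suppressed portion of the source they compute $(2,p)_p$ and $(2,p)_2$ directly for $p\equiv 5\pmod 8$), so your route is the natural ``proof behind the citation''; what the citation buys the paper is brevity, what your argument buys is independence from \cite{alsina} and an explicit record of where each ramified prime comes from. One small caveat: in case (i) the hypothesis $\left(\frac{q}{p}\right)\neq 1$ formally allows $p=q$, where the Legendre symbol is $0$ rather than $-1$ and your step $(p,q)_p=\left(\frac{q}{p}\right)=-1$ does not literally apply; either assume $p\neq q$ (as Alsina intends) or dispose of that degenerate case separately via $H_{\Q}(p,p)\cong H_{\Q}(p,-1)$, for which $(p,-1)_p=\left(\frac{-1}{p}\right)=-1$ and $(p,-1)_2=-1$ when $p\equiv 3\pmod 4$, again giving $D_H=2p$.
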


We recall that a small ramified $\mathbb{Q}$-algebra is a rational
quaternion algebra having the discriminant equal to the product of two distinct prime numbers.
The following necessary and sufficient explicit condition for a small ramified $\mathbb{Q}$-algebra $H_\Q(p,q)$ to be a division algebra over a quadratic field $\mathbb{Q}(\sqrt{d})$ was proved in \cite {astz}:
\begin{proposition}
\label{threedotsix}
  {Let} $p$ and $q$   {be two distinct  odd primes,}  with  $p$   {or} $q \equiv 1\pmod4$ {and} $(\frac{p}{q})=-1.$    Let  $K=\mathbb{Q}(\sqrt{d})$   {and let} $\Delta_{K}$   {be the discriminant of} $K.$
  {Then the quaternion algebra} $H_{\mathbb{Q}(\sqrt{d})}(p,q)$   {is a division algebra if and only if} $(\frac{\Delta_{K}}{p})= 1$   {or} $(\frac{\Delta_{K}}{q})=1$.
\end{proposition}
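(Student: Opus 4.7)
The plan is to turn the question into a purely local computation, relying on Proposition \ref{twodoteightone} (either split or division) and Proposition \ref{twodotfour} (split iff the reduced discriminant is trivial). The key input is Lemma \ref{threedotfour}(iii): the hypotheses on $p$ and $q$ force $D_{H_\Q(p,q)} = pq$, so $H_\Q(p,q)$ ramifies at exactly the two finite places $p$ and $q$ of $\Q$ and splits everywhere else, including at $2$ and at $\infty$ (for $\infty$, note that $p,q>0$ makes the conic of Proposition \ref{twodotfive} have an obvious real point). Since $H_K(p,q) \cong H_\Q(p,q)\otimes_\Q K$, every completion of $H_K(p,q)$ at a place of $K$ above $2$ or $\infty$ is a tensor of a matrix algebra with $K_v$, hence split; the whole question therefore reduces to the behavior at places of $K$ above $p$ and above $q$.

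\emph{The ``if'' direction.} Suppose $(\Delta_K/\ell)=1$ for some $\ell\in\{p,q\}$, i.e.\ $\ell$ splits in $K/\Q$. For any place $v$ of $K$ above $\ell$ we then have $K_v\cong\Q_\ell$, so $H_{K_v}(p,q)\cong H_{\Q_\ell}(p,q)$, which is the unique quaternion division algebra over $\Q_\ell$ because $\ell\mid D_{H_\Q(p,q)}$. Hence $H_K(p,q)$ ramifies at $v$, and Proposition \ref{twodoteightone} forces it to be a division algebra.

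\emph{The ``only if'' direction (contrapositive).} Assume $(\Delta_K/p)\neq 1$ and $(\Delta_K/q)\neq 1$, so that each of $p,q$ is inert or ramified in $K$. For $\ell\in\{p,q\}$ the unique place $v\mid\ell$ satisfies $[K_v:\Q_\ell]=2$. A standard local fact says that any quadratic extension of a non-archimedean local field kills the Brauer class of the local quaternion division algebra; applied here it gives that $H_{K_v}(p,q)$ splits. Together with the automatic splitting at places above $2$ and $\infty$, this shows that the reduced discriminant of $H_K(p,q)$ is the unit ideal of $\mathcal{O}_K$, and by Proposition \ref{twodotfour} the algebra is split.

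The main obstacle is the local step invoked in the ``only if'' direction: that a quadratic extension of $\Q_\ell$ splits the unique local quaternion division algebra over $\Q_\ell$. My plan is to justify this through Proposition \ref{twodotsix}, by exhibiting $p$ as a norm from $K_v(\sqrt{q})$; this rests on an explicit analysis of the biquadratic local extension $K_v(\sqrt{q})/\Q_\ell$ using local Hilbert symbol computations, in the spirit of the explicit approach already adopted elsewhere in the paper.
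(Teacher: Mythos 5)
Your argument is correct, but it follows a different route from the one this paper relies on. The paper does not reprove Proposition \ref{threedotsix}: it imports it from \cite{astz}, where (as in the parallel arguments for composita of quadratic fields) the proof runs through the global embedding criterion of Albert--Brauer--Hasse--Noether: $K=\Q(\sqrt d)$ splits $H_{\Q}(p,q)$ if and only if $K$ embeds into it as a quadratic subfield, which happens if and only if no prime ramified in $H_{\Q}(p,q)$ (i.e.\ neither $p$ nor $q$, by Lemma \ref{threedotfour}(iii)) splits in $K$; combining this with Proposition \ref{twodoteightone} gives the stated criterion at once. You instead work place by place over $K$: the ``if'' direction via $K_v\cong\Q_\ell$ when $\ell$ splits, and the ``only if'' direction via the fact that any quadratic extension of $\Q_\ell$ kills the class of the local quaternion division algebra, so that the reduced discriminant of $H_K(p,q)$ becomes trivial and Proposition \ref{twodotfour} applies. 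This is sound, and the local fact you need is exactly the local content underlying the embedding theorem the paper's source invokes globally; your version buys a self-contained, purely local proof, while the paper's version is shorter once the embedding theorem is available. Two small remarks: the local splitting fact is standard (every quadratic extension of a non-archimedean local field embeds as a maximal subfield of the unique quaternion division algebra, equivalently restriction multiplies the Brauer invariant $\tfrac12$ by the local degree $2$), so your planned explicit norm computation via Proposition \ref{twodotsix} is heavier than necessary; and if you do carry it out, note that when $\ell$ is inert the element $q$ may already be a square in $K_v$ (indeed $(\frac{q}{p})=-1$ under your hypotheses, and nonsquare units become squares in the unramified quadratic extension), in which case $K_v(\sqrt q)=K_v$ and the norm condition is vacuously satisfied, so the case analysis should be stated explicitly.
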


When  $q=2$ and $p$ is a prime such that $p \equiv 3$ (mod 8),  then,
  according to Lemma \ref{threedotfour} the discriminant
$D_{H_{\mathbb{Q}(p,q)} }$ is equal to $2p$,
so $H_{\mathbb{Q}}  (p,q  )$ is a division algebra.
The next proposition, which was proved in \cite{astz},  shows what
 happens when we extend the field of scalars from $\Q$ to
$\Q(  \sqrt{d} )$:
\begin{proposition}
\label{threedotseven}
 {Let} $p$   {be an odd prime,} with $p \equiv 3\pmod8$.   Let  $K=\mathbb{Q}(\sqrt{d})$   {and let} $\Delta_{K}$   {be the discriminant of} $K.$
  Then  $H_{\mathbb{Q}(\sqrt{d})}(p,2)$   {is a division algebra if and only if} $(\frac{\Delta_{K}}{p})= 1$   {or} $d$ $\equiv 1\pmod8$.
\end{proposition}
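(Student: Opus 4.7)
The plan is to reduce the question to a purely local one at the rational primes $2$ and $p$, and then to compute local degrees in the extension $K/\mathbb{Q}$.

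First, by Lemma \ref{threedotfour}(ii), $D_{H_{\mathbb{Q}}(p,2)}=2p$, so $H_{\mathbb{Q}}(p,2)$ is ramified exactly at the rational primes $2$ and $p$ (in particular, it is split at the infinite place, which is automatic since $p,2>0$). By Kohel's result cited just before Lemma \ref{threedotfour}, any prime of $\mathcal{O}_K$ that ramifies in $H_K(p,2)$ must divide $2\cdot p\cdot 2=4p$; combined with Proposition \ref{twodotfour} and Proposition \ref{twodoteightone}, this means $H_K(p,2)$ is a division algebra if and only if there exists a prime $\mathfrak{P}$ of $\mathcal{O}_K$ lying over $2$ or $p$ at which $H_K(p,2)$ is locally a division algebra.

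Next, for a place $\mathfrak{P}$ of $K$ above a rational prime $\ell\in\{2,p\}$, I would base change: $H_K(p,2)\otimes_K K_{\mathfrak{P}}\cong H_{\mathbb{Q}}(p,2)\otimes_{\mathbb{Q}_{\ell}}K_{\mathfrak{P}}$. Since $H_{\mathbb{Q}_{\ell}}(p,2)$ is a quaternion division algebra (local index $2$), the standard fact on indices under extensions of local fields gives that $H_{K_{\mathfrak{P}}}(p,2)$ is a division algebra precisely when the local degree $[K_{\mathfrak{P}}:\mathbb{Q}_{\ell}]$ is odd, i.e.\ equal to $1$. Equivalently, $\ell$ must have a prime factor $\mathfrak{P}$ in $K$ with $e_{\mathfrak{P}}=f_{\mathfrak{P}}=1$, which for a quadratic field means that $\ell$ splits in $K$.

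Finally, I would translate these local conditions back to arithmetic conditions on $d$. For $\ell=p$: $p$ splits in $K=\mathbb{Q}(\sqrt{d})$ iff $\left(\frac{\Delta_{K}}{p}\right)=1$; if instead $p$ is inert or ramified, $[K_{\mathfrak{P}}:\mathbb{Q}_p]=2$ and the algebra splits at $\mathfrak{P}$. For $\ell=2$: the prime $2$ splits in $K$ iff $d\equiv 1\pmod{8}$; when $d\equiv 5\pmod 8$ (inert) or $d\equiv 2,3\pmod 4$ (ramified), $[K_{\mathfrak{P}}:\mathbb{Q}_{2}]=2$ and again the algebra splits locally. Combining the two cases, $H_{K}(p,2)$ is a division algebra if and only if $\left(\frac{\Delta_{K}}{p}\right)=1$ or $d\equiv 1\pmod 8$, which is the claim.

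The only technical point to handle carefully is the local index computation in step two; once one records that over a local field an extension of degree $n$ kills a quaternion division algebra exactly when $n$ is even, the rest is bookkeeping via Theorem \ref{twodotone}-style splitting facts for quadratic fields. The rest is just a case split on the decomposition behaviour of $2$ and $p$ in $K$.
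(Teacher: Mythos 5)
Your argument is correct, and it is worth noting that this paper does not actually reprove Proposition \ref{threedotseven}: the statement is quoted from the companion paper \cite{astz}, and the framework used there (and echoed elsewhere in this series) organizes the deduction around the Albert--Brauer--Hasse--Noether embedding criterion, namely that the quadratic field $K=\mathbb{Q}(\sqrt{d})$ splits $H_{\mathbb{Q}}(p,2)$ if and only if $K$ embeds into it, if and only if neither of the two ramified primes $2$, $p$ splits in $K$. Your route replaces that global embedding criterion with a direct local computation: after pinning down the ramified set of $H_{\mathbb{Q}}(p,2)$ as $\{2,p\}$ via Lemma \ref{threedotfour}(ii) and discarding the infinite places (automatic since $p,2>0$), you base-change place by place and use the standard fact that a quaternion division algebra over $\mathbb{Q}_{\ell}$ remains a division algebra over $K_{\mathfrak{P}}$ exactly when $[K_{\mathfrak{P}}:\mathbb{Q}_{\ell}]$ is odd, which for quadratic $K$ means $\ell$ splits; the inert and ramified cases have local degree $2$ and so kill the local division algebra. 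The two arguments are equivalent in substance --- both reduce the claim to ``$H_{K}(p,2)$ is a division algebra iff $2$ or $p$ splits in $K$'' and then translate via the decomposition laws ($p$ splits iff $(\frac{\Delta_{K}}{p})=1$, $2$ splits iff $d\equiv 1\pmod 8$) --- but yours is self-contained at the level of local invariants under base change, whereas the paper's approach leans on the global embedding theorem; the local-index route also makes transparent why a quadratic extension can never create new ramification, only destroy it. I see no gaps.
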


When $p$ and $q$ are primes both congruent to $3$ modulo $4$
and   $(\frac{q}{p})\neq 1$ then,
according to Lemma \ref{threedotfour}(i)   the discriminant
$D_{H_{\mathbb{Q}}   (p,q)}$ is equal to $2p$, so $H_{\mathbb{Q}}(p,q)$ is a division algebra.
The next proposition which also was proved in \cite{astz},   tells us when
the quaternion algebra $H_{\mathbb{Q}(\sqrt{d})}(p,q)$ is still a division algebra:
\begin{proposition}
\label{threedoteight}
 {Let} $p$ and $q$   {be two odd prime integers,} with $p\equiv q\equiv 3\pmod4$  and  $(\frac{q}{p})\neq 1$.
 Let  $K=\mathbb{Q}(\sqrt{d})$   {and let} $\Delta_{K}$   {be the discriminant of} $K.$
  {Then the quaternion algebra} $H_{\mathbb{Q}(\sqrt{d})}(p,q)$   {is a division algebra if and only if} $(\frac{\Delta_{K}}{p})= 1$   {or} $d$ $\equiv 1\pmod8$.
\end{proposition}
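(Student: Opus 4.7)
The plan is to reduce the question to a purely local one at primes of $\mathcal{O}_K$ lying above the places where $H_{\mathbb{Q}}(p,q)$ is already ramified, and then apply the decomposition theorem for primes in the quadratic field $K$.

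Under the hypotheses $p\equiv q\equiv 3\pmod 4$ and $\bigl(\frac{q}{p}\bigr)\neq 1$, Lemma~\ref{threedotfour}(i) gives $D_{H_{\mathbb{Q}}(p,q)}=2p$. Hence $H_{\mathbb{Q}}(p,q)$ is a division algebra ramified exactly at the finite primes $2$ and $p$, and split at the real place because $p,q>0$ make its norm form indefinite. Consequently, the base change $H_K(p,q)=H_{\mathbb{Q}}(p,q)\otimes_{\mathbb{Q}}K$ can ramify only at primes of $\mathcal{O}_K$ lying above $2$ or $p$, and by Proposition~\ref{twodoteightone} it is a division algebra if and only if it ramifies at at least one such prime.

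The central step is the following local comparison, for $\ell\in\{2,p\}$ and $\mathfrak{p}\mid\ell$ a prime of $\mathcal{O}_K$: the local algebra $H_{K_\mathfrak{p}}(p,q)$ is a division algebra if and only if $\ell$ splits in $K$. When $\ell$ splits, $K_\mathfrak{p}=\mathbb{Q}_\ell$ and nothing changes. When $\ell$ is inert, $K_\mathfrak{p}/\mathbb{Q}_\ell$ is the unramified quadratic extension, which automatically splits any quaternion division algebra. When $\ell$ ramifies in $K$, one must verify directly that the local Hilbert symbol $(p,q)_{\mathfrak{p}}$ equals $1$; this can be done using Proposition~\ref{twodotfive} by exhibiting a zero of $px^{2}+qy^{2}=z^{2}$ in $K_\mathfrak{p}$, or using Proposition~\ref{twodotsix} by realizing $p$ as a norm from $K_\mathfrak{p}(\sqrt{q})$. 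Combining this with the classical decomposition of rational primes in $\mathbb{Q}(\sqrt{d})$ -- namely that $p$ splits iff $\bigl(\frac{\Delta_K}{p}\bigr)=1$ and that $2$ splits iff $d\equiv 1\pmod 8$ -- yields the stated biconditional.

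The main obstacle is the ramified case of this local comparison (when $p\mid d$, or when $d\equiv 2,3\pmod 4$ and $\ell=2$), where the unramified-extension trick fails and one is left with honest Hilbert-symbol computations over $\mathbb{Q}_2$ or $\mathbb{Q}_p$. These must exploit the congruences $p\equiv q\equiv 3\pmod 4$ and $\bigl(\frac{q}{p}\bigr)\neq 1$, and they parallel the analogous local calculations carried out in \cite{astz} for Propositions~\ref{threedotsix} and \ref{threedotseven}. Once they are in hand, the remainder of the argument is a routine application of the quadratic decomposition theorem.
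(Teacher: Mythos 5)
Your global architecture is sound and, once the local input is in place, it delivers exactly the stated criterion: by Lemma~\ref{threedotfour}(i) the reduced discriminant of $H_{\mathbb{Q}}(p,q)$ is $2p$, so only primes of $\mathcal{O}_K$ above $2$ and $p$ can ramify in $H_K(p,q)$, and by Proposition~\ref{twodoteightone} (together with Proposition~\ref{twodotfour}) the algebra is a division algebra if and only if at least one of them does; combining this with the decomposition laws in $\mathbb{Q}(\sqrt{d})$ ($p$ splits iff $(\frac{\Delta_K}{p})=1$, $2$ splits iff $d\equiv 1\pmod 8$) gives the equivalence. The genuine gap is the local comparison in the ramified case: you label it the ``main obstacle,'' defer it to unspecified Hilbert-symbol computations that ``must exploit the congruences,'' and never carry them out, so the proof is incomplete at its decisive step. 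Moreover the difficulty you anticipate is not really there: no case-by-case computation, and no use of $p\equiv q\equiv 3\pmod 4$ or $(\frac{q}{p})\neq 1$ beyond fixing the discriminant $2p$, is needed. Over $\mathbb{Q}_\ell$ there is a unique quaternion division algebra, and \emph{every} quadratic extension $L/\mathbb{Q}_\ell$, ramified or unramified, splits it: the local invariant is multiplied by $[L:\mathbb{Q}_\ell]=2$, so $\tfrac12\mapsto 0$ in $\mathbb{Q}/\mathbb{Z}$ (equivalently, every quadratic extension of $\mathbb{Q}_\ell$ embeds as a maximal subfield of the division algebra). Hence your trichotomy collapses to: $H_{K_{\mathfrak{p}}}(p,q)$ is a division algebra iff the local degree at $\mathfrak{p}$ is $1$, i.e.\ iff $\ell$ splits in $K$, with the inert and ramified cases handled uniformly.

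For comparison: the paper itself gives no proof of this proposition, quoting it from \cite{astz}, and the arguments used there (and in the companion results of this series) run through the Albert--Brauer--Hasse--Noether embedding theorem --- a quadratic field splits $H_{\mathbb{Q}}(p,q)$ if and only if it embeds into it, if and only if no prime ramified in $H_{\mathbb{Q}}(p,q)$ splits in that field --- so that $H_K(p,q)$ is a division algebra exactly when $2$ or $p$ splits in $K$. That criterion is your local comparison packaged globally; the two routes are equivalent in substance, and yours is the more hands-on local version. But to be a complete proof it must state and invoke the local-field fact above (or the embedding-theorem criterion), rather than promising computations that are, in fact, unnecessary.
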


\section{  Main results}
\label{dihedral}
Let's ask ourselves now what happens when  we consider a quaternion algebra over  a Galois extension $K$ of $\mathbb{Q}$,
with nonabelian Galois group  of  order $2l$, where $l$ is an odd prime integer.
For this purpose, we recall the following result, which can be found as an exercise in \cite[p. 77]{lam}:
\begin{remark}
 \label{sixdotone} Let $K/F$ be a finite extension of fields of odd degree, and let $a, b \in F \backslash \{0\}$.
Then the quaternion algebra $H_{K}(a,b)$ splits if and only if $H_{F}(a,b)$ splits.
\end{remark}
Let us first consider the case  $Gal( K/ \mathbb{Q}) \cong S_{3}$, i.e. the dihedral group $D_{3}$.
The following three propositions will deal with this case:
\begin{proposition}
 \label{sixdottwo}
Let $\epsilon$  be a primitive  third root of unity, and put $F =\mathbb{Q}\left(\epsilon\right)$.
  Let $\alpha \in K\backslash \{0, 1\}$ be a cubicfree integer,  put $K=F\left(\sqrt[3]{\alpha}\right)$
  and let  $p$, $q$  be two distinct odd prime integers such that
   $(\frac{p}{q})=-1$ and $p$  {or} $q\equiv1\pmod4$. {Then the quaternion algebra} $H_K(p,q)$   {is a division algebra if and only if} $(\frac{-3}{p})=1$ {or} $(\frac{-3}{q})=1.$
\end{proposition}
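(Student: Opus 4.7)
The plan is to reduce Proposition \ref{sixdottwo} to the quadratic case already handled in Proposition \ref{threedotsix}, exploiting the odd-degree descent provided by Remark \ref{sixdotone}.

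First I would identify $F$ explicitly. Since $\epsilon=\tfrac{-1+\sqrt{-3}}{2}$ is a primitive third root of unity, one has $F=\Q(\epsilon)=\Q(\sqrt{-3})$, an imaginary quadratic field with discriminant $\Delta_{F}=-3$. Because $F$ already contains the three cube roots of unity and $\alpha$ is cubicfree, the polynomial $X^{3}-\alpha$ is irreducible over $F$, so the extension $K=F(\sqrt[3]{\alpha})$ has degree $3$ over $F$.

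Next, I would invoke Remark \ref{sixdotone} for the extension $K/F$, whose degree is the odd number $3$. It yields that $H_{K}(p,q)$ splits if and only if $H_{F}(p,q)$ splits. Combining this with Proposition \ref{twodoteightone}, which asserts that a quaternion algebra is either split or a division algebra, we obtain the key equivalence: $H_{K}(p,q)$ is a division algebra if and only if $H_{F}(p,q)$ is a division algebra.

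Finally, I would apply Proposition \ref{threedotsix} to the quadratic field $F=\Q(\sqrt{-3})$. The standing hypotheses of the current proposition, namely that $p$ and $q$ are distinct odd primes with $(\tfrac{p}{q})=-1$ and $p\equiv 1\pmod 4$ or $q\equiv 1\pmod 4$, are exactly the hypotheses of Proposition \ref{threedotsix}. Substituting $\Delta_{F}=-3$ in its conclusion gives that $H_{F}(p,q)$ is a division algebra if and only if $\bigl(\tfrac{-3}{p}\bigr)=1$ or $\bigl(\tfrac{-3}{q}\bigr)=1$, which together with the previous equivalence yields the desired criterion.

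The main (and rather mild) obstacle is the verification that $[K:F]=3$, so that the odd-degree descent of Remark \ref{sixdotone} genuinely applies; once this is settled, no arithmetic of the nonabelian extension $K/\Q$ itself needs to be carried out, as the entire problem collapses to the already-solved quadratic situation. I would briefly flag the harmless edge case $p=3$ or $q=3$ (here $\bigl(\tfrac{-3}{3}\bigr)$ is not defined as a Legendre symbol, but $3$ ramifies in $F$ so the splitting behaviour is determined directly by the discriminant), but under the standing hypotheses no genuine pathology arises.
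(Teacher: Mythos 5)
Your proposal is correct and follows essentially the same route as the paper: identify $F=\Q(\epsilon)=\Q(\sqrt{-3})$ with $\Delta_F=-3$, use Remark \ref{sixdotone} together with Proposition \ref{twodoteightone} to transfer the division-algebra property across the odd-degree extension $K/F$, and conclude by Proposition \ref{threedotsix}. Your added checks (irreducibility of $X^{3}-\alpha$ so that $[K:F]=3$, and the harmless case $p=3$ or $q=3$) are sensible refinements but do not change the argument.
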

\begin{proof}
Clearly $F =\mathbb{Q}\left(\epsilon\right)=\mathbb{Q}\left(i\sqrt{3}\right)$ is an imaginary  quadratic number field
and $\left[K:F\right]=3$. Moreover,
$K/\mathbb{Q}$ is Galois  and Gal$( K/\mathbb{Q}) \cong S_{3}$.
According to Remark \ref{sixdotone} and Proposition \ref{twodoteightone}, $H_K(p,q)$ is a division algebra if and only if $H_F(p,q)$ is a division algebra.
By Proposition \ref{threedotsix}, this can happen if and only if $(\frac{-3}{p})=1$ {or} $(\frac{-3}{q})=1.$
\end{proof}
\begin{proposition}
 \label{sixdottthree}
Let $\epsilon$ be a primitive  third root of unity, and put $F =\mathbb{Q}\left(\epsilon\right)$.
  Let also $\alpha \in K \backslash \{0, 1\}$ be a cubicfree integer, put  $K=F\left(\sqrt[3]{\alpha}\right)$
 and {let} $p$ be {an odd prime integer} such that $p \equiv 3\pmod8$. {Then the quaternion algebra} $H_K(p,2)$   {is a division algebra if and only if} $(\frac{-3}{p})=1.$
\end{proposition}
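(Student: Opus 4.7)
The plan is to reduce the statement to the already-known quadratic case exactly as in the proof of Proposition \ref{sixdottwo}, but invoking Proposition \ref{threedotseven} in place of Proposition \ref{threedotsix}.

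First I would observe that $F=\mathbb{Q}(\epsilon)=\mathbb{Q}(\sqrt{-3})$ is an imaginary quadratic number field with squarefree discriminant parameter $d=-3$ and discriminant $\Delta_F=-3$. Since $\alpha$ is cubicfree and $\epsilon\in F$, the extension $K=F(\sqrt[3]{\alpha})/F$ is a Kummer extension of degree $3$, so $[K:F]=3$ is odd. This puts us in position to apply Remark \ref{sixdotone}: the quaternion algebra $H_K(p,2)$ splits if and only if $H_F(p,2)$ splits. Combining this with Proposition \ref{twodoteightone} (which tells us that over any field of characteristic $\neq 2$ a quaternion algebra is either split or a division algebra), it follows at once that $H_K(p,2)$ is a division algebra if and only if $H_F(p,2)$ is a division algebra.

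Next I would apply Proposition \ref{threedotseven} to the quadratic field $F=\mathbb{Q}(\sqrt{-3})$ and the prime $p\equiv 3\pmod 8$. That proposition asserts that $H_F(p,2)$ is a division algebra if and only if $\left(\frac{\Delta_F}{p}\right)=1$ or $d\equiv 1\pmod 8$. Here $d=-3\equiv 5\pmod 8$, so the second disjunct fails automatically, and the criterion collapses to the single condition $\left(\frac{\Delta_F}{p}\right)=\left(\frac{-3}{p}\right)=1$. Chaining the two equivalences yields the claimed characterization.

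The argument is essentially a template application, so there is no real obstacle; the only thing worth double-checking is the observation that $-3\not\equiv 1\pmod 8$, which removes one of the two alternatives in Proposition \ref{threedotseven} and is what makes the final criterion as clean as stated. Apart from that, one should confirm that the hypotheses of Remark \ref{sixdotone} are met (namely that $p,2\in F\setminus\{0\}$, which is clear), so that the odd-degree descent is legitimate.
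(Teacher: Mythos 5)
Your proposal is correct and follows essentially the same route as the paper: the paper's proof is precisely the template of Proposition \ref{sixdottwo} (odd-degree descent via Remark \ref{sixdotone} together with Proposition \ref{twodoteightone}) with Proposition \ref{threedotsix} replaced by Proposition \ref{threedotseven}, and your observation that $d=-3\equiv 5\pmod 8$ eliminates the second alternative, leaving exactly the stated criterion $\left(\frac{-3}{p}\right)=1$.
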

\begin{proof}
The proof is obtained  from the proof of Proposition \ref{sixdottwo} by replacing
Proposition \ref{threedotsix} with Proposition \ref{threedotseven}.
\end{proof}
\begin{proposition}
 \label{sixdotfour}
Let $\epsilon$ be a primitive  third root of unity, and put  $F =\mathbb{Q}\left(\epsilon\right)$.
  Let $\alpha \in K\backslash \{0, 1\}$ be a cubicfree integer, put  $K=F\left(\sqrt[3]{\alpha}\right)$
and {let} $p$ and $q$  {be distinct odd prime integers} with
$(\frac{q}{p})\neq 1$ and $p$$\equiv$$q$$\equiv3$ (mod $4$). {Then the quaternion algebra} $H_K(p,q)$   {is a division algebra if and only if} $(\frac{-3}{p})=1.$
\end{proposition}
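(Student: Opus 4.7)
The plan is to mimic exactly the strategy used in Propositions \ref{sixdottwo} and \ref{sixdottthree}: reduce the question from the cubic extension $K/F$ down to the quadratic base $F = \mathbb{Q}(\epsilon) = \mathbb{Q}(\sqrt{-3})$, and then invoke the appropriate classification over imaginary quadratic fields that we have already proved.

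First I would observe that $[K:F] = 3$ is odd. By Remark \ref{sixdotone}, the quaternion algebra $H_K(p,q)$ splits if and only if $H_F(p,q)$ splits. Combining this with the dichotomy in Proposition \ref{twodoteightone} (a rational quaternion algebra over a field of characteristic $\neq 2$ is either split or division), we conclude that $H_K(p,q)$ is a division algebra if and only if $H_F(p,q)$ is a division algebra. This completely removes the cubic layer from the problem.

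Next I would invoke Proposition \ref{threedoteight}, which is tailor-made for the hypotheses we have: $p$ and $q$ are distinct odd primes with $p \equiv q \equiv 3 \pmod{4}$ and $\left(\tfrac{q}{p}\right) \neq 1$. It tells us that $H_{\mathbb{Q}(\sqrt{d})}(p,q)$ is a division algebra if and only if $\left(\tfrac{\Delta_{F}}{p}\right) = 1$ or $d \equiv 1 \pmod{8}$. In our setting $F = \mathbb{Q}(\sqrt{-3})$, so $d = -3$ and the discriminant is $\Delta_{F} = -3$. Since $-3 \equiv 5 \pmod{8}$, the second alternative is automatically false, and the criterion collapses to the single condition $\left(\tfrac{-3}{p}\right) = 1$.

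Putting the two steps together yields the stated equivalence. There is essentially no obstacle: both ingredients (the odd-degree descent in Remark \ref{sixdotone} and the quadratic classification in Proposition \ref{threedoteight}) are already in hand, and the only thing to check is that the ``$d \equiv 1 \pmod 8$'' branch is vacuous for $d = -3$, which it is. This is why the proposition is clean and asymmetric (only a condition on $p$ appears), matching the asymmetry already present in Proposition \ref{threedoteight}.
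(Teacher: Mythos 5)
Your proposal is correct and follows essentially the same route as the paper: odd-degree descent via Remark \ref{sixdotone} together with Proposition \ref{twodoteightone}, then the quadratic-field criterion of Proposition \ref{threedoteight} applied to $F=\mathbb{Q}(\sqrt{-3})$. Your explicit observation that the branch $d\equiv 1\pmod 8$ is vacuous for $d=-3$ is exactly the (implicit) reason the paper's statement involves only the condition $\left(\frac{-3}{p}\right)=1$.
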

\begin{proof}
The proof is obtained from the proof of Proposition \ref{sixdottwo} after replacing
Proposition \ref{threedotsix} with Proposition \ref{threedoteight}.
\end{proof}

In what follows,  let $\ell$ be an odd positive prime integer and $F$  a quadratic number field. Let $K$ be an extension of $F$ defined as follows: $K$ is a dihedral extension of $\Q$ of prime degree $\ell$ over $F$,  or $K$ is an abelian $\ell$-extension unramified over $F$ assuming $\ell$ divides the class number of  $F$ (so $[K: F]=\ell^n$ which is odd, with $n\in\N^*$).
The existence of such a $K$ containing $F$ is guaranteed, in the second case, by class field theory, but  the first one is a tipical problem in inverse Galois theory.
The following result from \cite[p. 352-353]{jensen} guarantees us that such a $K$ indeed exists (for the first case):
\begin{theorem}
 \label{sixdotfive}
For any prime $\ell$ and any quadratic field $F =\mathbb{Q}(\sqrt{d})$
there exist infinitely many dihedral fields $K$ of degree $2\ell$ containing $F$.
\end{theorem}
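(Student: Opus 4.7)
The plan is to construct infinitely many cyclic extensions $K/F$ of degree $\ell$ such that the nontrivial element $\tau\in\mathrm{Gal}(F/\Q)$ lifts to an involution of $K$ which acts on $\mathrm{Gal}(K/F)$ by inversion; this is precisely the condition for $\mathrm{Gal}(K/\Q)\cong D_\ell$. Since $\ell$ is odd, any group extension $1\to\Z/\ell\to G\to\Z/2\to 1$ whose induced action is by $-1$ is automatically split (Schur--Zassenhaus, or directly $H^{2}(\Z/2,\Z/\ell)=0$ for this action), so it suffices to produce cyclic degree-$\ell$ extensions of $F$ on which $\tau$ acts by inversion.

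By class field theory, cyclic extensions $K/F$ of degree $\ell$ with conductor dividing a $\tau$-stable modulus $\mathfrak{m}$ correspond to surjective characters $\chi\colon\mathrm{Cl}_{\mathfrak{m}}(F)\twoheadrightarrow\Z/\ell$, and $K/\Q$ is Galois iff $\chi\circ\tau=\chi^{\pm 1}$. The choice $\chi\circ\tau=\chi$ yields the cyclic abelian case $\mathrm{Gal}(K/\Q)\cong\Z/2\ell$, whereas $\chi\circ\tau=\chi^{-1}$ gives exactly the dihedral case we seek. Thus it is enough to show that, as $\mathfrak{m}$ varies, the $(-1)$-eigenspace $\mathrm{Hom}(\mathrm{Cl}_{\mathfrak{m}}(F),\Z/\ell)^{\tau=-1}$ becomes arbitrarily large.

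Concretely, I would take rational primes $q$ which are inert in $F/\Q$ and satisfy $q\equiv -1\pmod{\ell}$; by Chebotarev applied to the compositum $F\cdot\Q(\zeta_{\ell})$, there are infinitely many such $q$. For such $q$, the residue group $(\OL_F/q\OL_F)^{\times}\cong\mathbb{F}_{q^{2}}^{\times}$ is cyclic of order $q^{2}-1$, and $\tau$ acts on it as the Frobenius $x\mapsto x^{q}$, which on the $\ell$-primary part is the map $x\mapsto x^{-1}$. From the exact sequence
\[
\OL_F^{\times}\longrightarrow(\OL_F/q\OL_F)^{\times}\longrightarrow\mathrm{Cl}_{q}(F)\longrightarrow\mathrm{Cl}(F)\longrightarrow 0,
\]
taking $\ell$-parts and then $(-1)$-eigenspaces under $\tau$ produces a nonzero quotient isomorphic to $\Z/\ell$, provided the image of the global units does not exhaust the $\ell$-part of the local units. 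For each such $q$ this yields a dihedral field $K_{q}$ of degree $2\ell$ over $\Q$ containing $F$, ramified at $q$; distinct $q$ give distinct ramification sets and hence pairwise distinct fields, producing the required infinite family.

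The main obstacle is the third step: controlling the image of $\OL_F^{\times}$ in $(\OL_F/q\OL_F)^{\times}$ so that the $(-1)$-eigenspace of the $\ell$-part survives in $\mathrm{Cl}_{q}(F)$. For imaginary quadratic $F$ the unit group is finite and the obstruction disappears for all but finitely many $q$. For real quadratic $F$ one must additionally require the fundamental unit $\varepsilon$ to be an $\ell$-th power residue modulo $q$ with at most bounded multiplicity; this is governed by the splitting of $q$ in the Kummer extension $F(\zeta_{\ell},\varepsilon^{1/\ell})/F$, and a further application of Chebotarev shows that the set of primes $q$ satisfying simultaneously the inertness, the congruence $q\equiv -1\pmod\ell$, and this Kummer-type condition still has positive density, hence is infinite.
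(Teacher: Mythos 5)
The paper itself offers no proof of this theorem: it is quoted from Jensen--Yui \cite{jensen}, so there is no internal argument to compare with, and your class-field-theoretic strategy is a legitimate route in outline. But as written it has a genuine gap: the auxiliary primes you base everything on need not exist. You require rational primes $q$ that are simultaneously inert in $F$ and congruent to $-1\pmod{\ell}$, and you invoke Chebotarev in the compositum $F\cdot\Q(\zeta_{\ell})$. That only works if the two Frobenius conditions can be realized by a single element of $\mathrm{Gal}(F(\zeta_{\ell})/\Q)$, and this fails exactly when $F$ is the real quadratic subfield of $\Q(\zeta_{\ell})$, i.e. $F=\Q(\sqrt{\ell})$ with $\ell\equiv 1\pmod 4$: if $q\equiv -1\pmod{\ell}$ then $\left(\frac{q}{\ell}\right)=\left(\frac{-1}{\ell}\right)=1$, hence $\left(\frac{\ell}{q}\right)=1$ by reciprocity and $q$ splits in $F$. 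So for such pairs, e.g. $\ell=5$ and $F=\Q(\sqrt{5})$, your set of primes is empty and the construction produces no dihedral fields at all, leaving the theorem unproved precisely there. The repair should be stated: when $F\subset\Q(\zeta_{\ell})$ is real, take instead primes $q\equiv 1\pmod{\ell}$ that split in $F$ (a positive-density condition with no obstruction); then $(\OL_F/q\OL_F)^{\times}\cong\mathbb{F}_q^{\times}\times\mathbb{F}_q^{\times}$ with $\tau$ swapping the factors, and the anti-diagonal of the $\ell$-part already gives a nontrivial $(-1)$-eigenspace in the ray class group, after which your argument resumes.

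Two smaller points. The claim that for imaginary $F$ the unit obstruction disappears for all but finitely many $q$ is wrong in the single case $\ell=3$, $F=\Q(\sqrt{-3})$: the image of $\mu_6$ exhausts the $3$-part of $\mathbb{F}_{q^2}^{\times}$ whenever $3$ exactly divides $q+1$, which happens for infinitely many $q$; one must impose $q\equiv -1\pmod 9$ (still an infinite set, so harmless once said). For real $F$, the final positive-density assertion needs an element of $\mathrm{Gal}\bigl(F(\zeta_{\ell},\varepsilon^{1/\ell})/\Q\bigr)$ with the prescribed restrictions to $F$ and $\Q(\zeta_{\ell})$ which moreover forces $\varepsilon$ to be an $\ell$-th power residue; such an element does exist whenever the quadratic and cyclotomic conditions are compatible (any involution lifting the required element of $\mathrm{Gal}(F(\zeta_{\ell})/\Q)$ works, and it exists because the Kummer layer has odd degree), but you assert the density without this check. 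Finally, to get pairwise distinct fields you should choose the character nontrivial on the image of $(\OL_F/q\OL_F)^{\times}$, so that $K_q/F$ is actually ramified at $q$ rather than merely of conductor dividing $q$.
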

 If the quaternion algebra $H_{F}(p,q)$ is a division algebra,
 we would like to know when $H_{K}(p,q)$   is still a division algebra.
The following three propositions will allow us
to achieve this task:
\begin{proposition}
\label{sixdotsix}
Let  $F$ be a  quadratic field  and $\Delta_{F}$ its discriminant. Let $K$ be an extension of $F$  defined as above. Let $p$ and $q$  be distinct odd prime integers, with
$(\frac{p}{q})=-1$, and $p$  {or} $q\equiv1\pmod4$. Then the quaternion algebra $H_{K}(p, q)$   {is a division algebra if and only if} $\left(\frac{\Delta_{F}}{p}\right)=1$ {or} $\left(\frac{\Delta_{F}}{q}\right)=1.$
\end{proposition}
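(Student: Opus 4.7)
The plan is to reduce the statement over $K$ to the already-known statement over $F$ via the odd-degree reduction of Remark \ref{sixdotone}, and then invoke Proposition \ref{threedotsix}.

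The crucial observation is that in both scenarios described in the setup, the extension $K/F$ has odd degree. Indeed, if $K$ is a dihedral extension of $\Q$ of degree $2\ell$ containing $F$, then $[K:F] = \ell$ is odd; and if $K$ is an abelian $\ell$-extension unramified over $F$, then $[K:F] = \ell^n$ is again odd. So regardless of which case we are in, the hypothesis of Remark \ref{sixdotone} is satisfied for the pair $(K,F)$ with $a = p$, $b = q$.

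Applying Remark \ref{sixdotone}, we conclude that $H_K(p,q)$ splits if and only if $H_F(p,q)$ splits. Since by Proposition \ref{twodoteightone} a quaternion algebra is either split or a division algebra (these being exclusive alternatives), it follows equivalently that $H_K(p,q)$ is a division algebra if and only if $H_F(p,q)$ is a division algebra.

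Now the hypotheses on $p$ and $q$, namely that they are distinct odd primes with $(\tfrac{p}{q})=-1$ and $p \equiv 1 \pmod 4$ or $q \equiv 1 \pmod 4$, are precisely those of Proposition \ref{threedotsix} applied to the quadratic field $F$ with discriminant $\Delta_F$. That proposition states that $H_F(p,q)$ is a division algebra if and only if $\left(\tfrac{\Delta_F}{p}\right) = 1$ or $\left(\tfrac{\Delta_F}{q}\right) = 1$. Chaining the two equivalences yields the desired characterization. There is essentially no obstacle here beyond verifying that $[K:F]$ is odd in both cases; the entire proof is a two-step reduction, and I expect it to be very short.
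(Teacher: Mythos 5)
Your proposal is correct and follows exactly the paper's own argument: observe that $[K:F]$ equals $\ell$ or $\ell^{n}$ and hence is odd, apply Remark \ref{sixdotone} together with Proposition \ref{twodoteightone} to reduce the division-algebra question over $K$ to the same question over $F$, and then invoke Proposition \ref{threedotsix}. No differences worth noting.
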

\begin{proof}
Note first  that the degree $\left[K: F\right] $ is odd, since it is equal to $\ell$ or $\ell^n$ with $n\in\N^*$.
According to Remark \ref{sixdotone} and Proposition \ref{twodoteightone}, $H_{K}(p,q)$ is a division algebra if and only if $H_{F}(p,q)$ is a division algebra. By Proposition \ref{threedotsix}, this can happen if and only if $\left(\frac{\Delta_{F}}{p}\right)=1$ {or} $\left(\frac{\Delta_{F}}{q}\right)=1.$
\end{proof}
\begin{proposition}
\label{sixdotseven}
{Let} $d\neq1$  {be a squarefree integer} and let $F =\mathbb{Q}(\sqrt{d})$ be a  quadratic number field  and  $\Delta_{F}$ its discriminant. Let $K$ be an extension of $F$ defined  as above. {Let} $p\equiv3 \pmod8$ be an odd prime integer. {Then the quaternion algebra} $H_{K}(p,q)$   {is a division algebra if and only if} $\left(\frac{\Delta_{F}}{p}\right)=1$ {or} $d\equiv1\pmod8$.
\end{proposition}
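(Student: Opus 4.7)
The plan is to mirror the short argument used for Proposition \ref{sixdotsix}, reducing the question over $K$ to the already-known question over the quadratic base $F$. Note first that this statement is the natural analogue of Proposition \ref{threedotseven} (the quadratic-field case with $q=2$ and $p\equiv 3\pmod 8$), so presumably the second slot of the algebra is $q=2$. Under that reading, the plan is to apply Remark \ref{sixdotone} together with Proposition \ref{twodoteightone}, and then to invoke Proposition \ref{threedotseven}.

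First I would record that the degree $[K:F]$ is odd: by the assumption on $K$, it equals either $\ell$ (dihedral case) or $\ell^{n}$ (unramified abelian $\ell$-extension case) where $\ell$ is an odd prime, so in particular $[K:F]$ is odd. Remark \ref{sixdotone} then applies with the base field $F$ and the extension $K/F$, yielding the equivalence
\begin{equation*}
H_{K}(p,2)\ \text{splits} \iff H_{F}(p,2)\ \text{splits.}
\end{equation*}
Combining this with the dichotomy in Proposition \ref{twodoteightone} (a quaternion algebra over a field of characteristic $\neq 2$ is either split or a division algebra), I obtain the equivalence
\begin{equation*}
H_{K}(p,2)\ \text{is a division algebra}\iff H_{F}(p,2)\ \text{is a division algebra.}
\end{equation*}

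Next I would appeal directly to Proposition \ref{threedotseven}, which characterizes exactly when $H_{\mathbb{Q}(\sqrt{d})}(p,2)$ is a division algebra for $p\equiv 3\pmod 8$: this occurs iff $\bigl(\tfrac{\Delta_{F}}{p}\bigr)=1$ or $d\equiv 1\pmod 8$. Chaining the two equivalences gives the desired biconditional and closes the proof.

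The bulk of the argument is therefore just combining three previously established facts; the only point that deserves care is checking that the hypotheses on $K$ really do force $[K:F]$ to be odd in both of the two cases allowed by the setup (dihedral of prime degree $\ell$, or abelian unramified $\ell$-extension from class field theory). This is the only possible obstacle, and it is immediate from the definitions: in both cases the degree is a power of the odd prime $\ell$, so Remark \ref{sixdotone} applies. No other analysis, e.g.\ of ramification in $K/F$ or of Hilbert symbols at places above $2$ or $p$, is needed, since the odd-degree transfer already does all of the work.
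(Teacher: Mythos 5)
Your proposal is correct and follows essentially the same route as the paper: the paper's own proof is exactly the reduction via the odd degree $[K:F]$, Remark \ref{sixdotone} and Proposition \ref{twodoteightone}, followed by Proposition \ref{threedotseven} in place of Proposition \ref{threedotsix}. Your reading that the second slot should be $q=2$ (so the statement concerns $H_{K}(p,2)$) is also the intended one, consistent with the paper's appeal to Proposition \ref{threedotseven}.
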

\begin{proof}
The proof is obtained from the proof of Proposition \ref{sixdotsix}, after replacing
Proposition \ref{threedotsix} with Proposition \ref{threedotseven}.
\end{proof}
\begin{proposition}
\label{sixdoteight}
{Let} $d\neq1$  {be a squarefree integer} and let $F =\mathbb{Q}(\sqrt{d})$ be a  quadratic number field  and  $\Delta_{F}$ its discriminant. Let $K$ be an extension of $F$ defined  as above. {Let} $p$ and $q$  {be distinct odd prime integers}
with $(\frac{q}{p})\neq 1$ and $p\equiv q\equiv3\pmod4$. {Then the quaternion algebra} $H_{K}(p, q)$   {is a division algebra if and only if} $\left(\frac{\Delta_{F}}{p}\right)=1$ {or} $d\equiv1\pmod8$.
\end{proposition}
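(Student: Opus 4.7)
The plan is to mimic exactly the strategy used for Proposition \ref{sixdotsix} and Proposition \ref{sixdotseven}, replacing the reference to Proposition \ref{threedotsix} (respectively Proposition \ref{threedotseven}) by the appropriate result for the case $p\equiv q\equiv 3\pmod 4$ with $\left(\frac{q}{p}\right)\neq 1$, which is precisely Proposition \ref{threedoteight}. The transfer from $F$ to $K$ is the content of Remark \ref{sixdotone} combined with Proposition \ref{twodoteightone}; the quadratic base case supplies the explicit numerical criterion.

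More concretely, I would begin by observing that in either of the two constructions of $K$ specified just before Theorem \ref{sixdotfive}, the degree $[K:F]$ is odd: it equals $\ell$ in the dihedral case and $\ell^n$ in the unramified abelian $\ell$-extension case, and $\ell$ is an odd prime. This is the only property of $K$ that will actually be used, since we do not need to exploit any further Galois-theoretic structure.

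Next, I would apply Remark \ref{sixdotone} to the odd-degree extension $K/F$ with the elements $a=p$, $b=q$ of $F$. This yields that $H_{K}(p,q)$ splits if and only if $H_{F}(p,q)$ splits. Invoking Proposition \ref{twodoteightone}, which asserts that every quaternion algebra over a field of characteristic different from two is either split or a division algebra, this equivalence of splitting promotes to an equivalence of non-splitting: $H_{K}(p,q)$ is a division algebra if and only if $H_{F}(p,q)$ is a division algebra.

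Finally, since the hypotheses $p\equiv q\equiv 3\pmod 4$ and $\left(\frac{q}{p}\right)\neq 1$ are exactly those of Proposition \ref{threedoteight} applied over the quadratic field $F=\mathbb{Q}(\sqrt{d})$, that proposition gives the desired characterization: $H_{F}(p,q)$ is a division algebra if and only if $\left(\frac{\Delta_{F}}{p}\right)=1$ or $d\equiv 1\pmod 8$. Concatenating this with the equivalence from the previous step finishes the proof. I do not foresee any real obstacle here; the only point that deserves a brief sentence is the verification that the degree $[K:F]$ is odd in both of the two cases allowed for $K$, so that Remark \ref{sixdotone} is legitimately applicable.
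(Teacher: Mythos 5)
Your proposal is correct and follows exactly the paper's own argument: the paper proves this proposition by repeating the proof of Proposition \ref{sixdotsix} (odd degree $[K:F]$, then Remark \ref{sixdotone} together with Proposition \ref{twodoteightone} to transfer the division-algebra property between $K$ and $F$) with Proposition \ref{threedotsix} replaced by Proposition \ref{threedoteight}. No discrepancies to report.
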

\begin{proof}
The proof is obtained from the proof of Proposition \ref{sixdotsix} after replacing
Proposition \ref{threedotsix} with Proposition \ref{threedoteight}.
\end{proof}
We conclude our discussion
with the main theorem of the paper:
\begin{theorem}
\label{teorema}
{Let} $d\neq1$  {be a squarefree integer} and $\ell$ an odd positive prime integer. Let $F =\mathbb{Q}(\sqrt{d})$ be a  quadratic number field and $\Delta_{F}$ its discriminant. Let $K$  be a dihedral extension of $\Q$ of prime degree $\ell$ over $F$  or $K$ is an abelian $\ell$-extension unramified over $F$ whenever $\ell$ divides the class number of $F$.
 Let $p$ and $q$ be  two distinct odd prime integers.
   Then the quaternion algebra $H_{K}(p,q)$   {is a division algebra if and only if}
   one of the following conditions is verified:
\begin{enumerate}[\rm1.]
  \item
  $p$ or  $q \equiv 1  \pmod4$, $(\frac{p}{q})=-1$,
  and  $\left(\frac{\Delta_{F}}{p}\right)=1$ {or} $\left(\frac{\Delta_{F}}{q}\right)=1$;
  \item
$p \equiv 3 \pmod8$,
and $\left(\frac{\Delta_{F}}{p}\right)=1$ {or} $d\equiv1\pmod8$;
  \item
    $p \equiv q \equiv 3\pmod4$,
 $(\frac{q}{p})\neq 1$,
and
 $\left(\frac{\Delta_{F}}{p}\right)=1$ {or} $d\equiv1\pmod8$.
 \end{enumerate}
 \end{theorem}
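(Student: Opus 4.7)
The plan is to derive the theorem directly from Propositions \ref{sixdotsix}, \ref{sixdotseven}, and \ref{sixdoteight} via a case analysis on $(p,q)$, preceded by a uniform reduction from $K$ down to its quadratic base field $F$.

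First, I would isolate the key reduction step. Under either hypothesis on $K$, the degree $[K:F]$ is odd: it equals $\ell$ in the dihedral case and $\ell^{n}$ in the unramified abelian $\ell$-extension case, and $\ell$ is odd by assumption. Applying Remark \ref{sixdotone} to the odd-degree extension $K/F$ shows that $H_K(p,q)$ splits if and only if $H_F(p,q)$ splits, and combining this with Proposition \ref{twodoteightone}---a quaternion algebra is either split or a division algebra---gives the equivalence
\[
H_K(p,q) \text{ is a division algebra} \iff H_F(p,q) \text{ is a division algebra}.
\]
This observation is already embedded inside each of Propositions \ref{sixdotsix}--\ref{sixdoteight}, but it is worth isolating up front because it applies uniformly to both types of $K$ allowed by the statement.

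Having made this reduction, the theorem collapses to a classification of $H_F(p,q)$ for odd distinct primes $p,q$ and a quadratic base field $F$. Here I would split into cases according to the congruence conditions on $p$ and $q$, invoking Lemma \ref{threedotfour} at the outset: if none of cases (i)--(iii) of that lemma applies, then $H_\mathbb{Q}(p,q)$ is already split, hence so are $H_F(p,q)$ and $H_K(p,q)$, and the ``only if'' direction of the biconditional is automatic. The remaining nontrivial configurations are exactly the three cases of the theorem's conclusion, and the ``if and only if'' criterion for $H_F(p,q)$ to be a division algebra in each of them is supplied respectively by Propositions \ref{sixdotsix}, \ref{sixdotseven}, and \ref{sixdoteight}, whose discriminant conditions $\left(\tfrac{\Delta_F}{p}\right)=1$, $\left(\tfrac{\Delta_F}{q}\right)=1$, or $d\equiv 1\pmod 8$ are transcribed verbatim into the theorem.

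There is no substantive obstacle here; the proof is pure assembly of already-established ingredients. The only minor bookkeeping task is to verify that the three listed cases are exhaustive among pairs $(p,q)$ for which $H_\mathbb{Q}(p,q)$ is a division algebra, which is exactly what Lemma \ref{threedotfour} guarantees, and this is precisely what turns the theorem into a clean biconditional rather than a disjoint list of sufficient conditions.
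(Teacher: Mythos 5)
Your skeleton is the same as the paper's intended argument: the theorem is meant to follow by assembling Propositions \ref{sixdotsix}, \ref{sixdotseven} and \ref{sixdoteight}, each of which already contains the reduction from $K$ to $F$ through Remark \ref{sixdotone} and Proposition \ref{twodoteightone} (the paper prints no separate proof for the theorem). So the reduction step and the appeal to those three propositions are faithful to the source.

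The difficulty is precisely in what you call ``minor bookkeeping''. First, Lemma \ref{threedotfour} has no converse: it computes $D_H$ under its hypotheses (i)--(iii) but says nothing when none of them holds, so it cannot ``guarantee'' that $H_{\Q}(p,q)$ splits outside the listed configurations; for that you need Alsina's splitting criterion for $H_{\Q}(p,q)$ (Lemma 1.20 of \cite{alsina}), which is not among the results quoted in this version of the paper and must be invoked explicitly. Second, your identification of case 2 with Proposition \ref{sixdotseven} clashes with the hypotheses: that proposition, like Proposition \ref{threedotseven} from which it is derived, really concerns $H_K(p,2)$, i.e.\ $q=2$, whereas the theorem assumes $p$ and $q$ are distinct \emph{odd} primes, so case 2 is not supplied by it. Third, exhaustiveness fails as you state it because of the $p\leftrightarrow q$ asymmetry: if $p\equiv q\equiv 3\pmod 4$ and $\left(\frac{q}{p}\right)=1$, then $\left(\frac{p}{q}\right)=-1$ by reciprocity, so $H_{\Q}(p,q)\cong H_{\Q}(q,p)$ is still a division algebra, yet none of Lemma \ref{threedotfour}(i)--(iii) applies to the ordered pair $(p,q)$, and the correct criterion over $F$ (Proposition \ref{sixdoteight} after swapping $p$ and $q$) involves $\left(\frac{\Delta_{F}}{q}\right)$, not $\left(\frac{\Delta_{F}}{p}\right)$. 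Concretely, for $d=5$, $p=11$, $q=3$ one has $11\equiv 3\pmod 8$ and $\left(\frac{5}{11}\right)=1$, so condition 2 of the statement holds, but the ramified primes of $H_{\Q}(11,3)$ are $2$ and $3$, both inert in $\Q(\sqrt{5})$, so $H_F(11,3)$ and hence $H_K(11,3)$ split. Any complete write-up must therefore handle the both-congruent-to-$3$ configuration up to the symmetry $p\leftrightarrow q$ (adding the symmetric clause with $\left(\frac{\Delta_{F}}{q}\right)$, as in the earlier quadratic and biquadratic classifications) and treat the case 2 alignment with $q=2$ explicitly, rather than asserting that the three cases and the three propositions match verbatim.
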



\begin{thebibliography}{99}

\bibitem{astz}
V. Acciaro, D. Savin, M. Taous and A. Zekhnini,
On quaternion algebras over quadratic number fields,
preprint,
2019.

\bibitem{astz2}
V. Acciaro, D. Savin, M. Taous and A. Zekhnini,
On quaternion algebras over the composite of quadratic number fields,
preprint,
2019.


\bibitem{alsina}
M. Alsina and P. Bayer, Quaternion Orders, Quadratic Forms and Shimura Curves, CRM Monograph Series,
22, American Mathematical Society, 2004.


\bibitem{chapman 2016} A. Chapman, D. J.Grynkiewiczb, E. Matzri, L. H.Rowen, U. Vishne, Kummer spaces in symbol algebras of prime degree, J. of Pure and Applied Algebra, vol. 220, issue 10 (2016): 3363-3371.

\bibitem{chapman 2017} A. Chapman, Symbol length of p-algebras of prime exponent, J. of Algebra and its Applications, vol. 16, No. 7
(2017), 1750136 (9 p.).

\bibitem{chinburg}
T. Chinburg and E. Friedman, An embedding theorem for quaternion algebras, J. London Math. Soc. (2), 60(1):33--44, 1999.



\bibitem{gille}
P. Gille, T. Szamuely,  Central Simple Algebras and Galois
Cohomology, Cambridge University Press, 2006.

 \bibitem{goldstein}  D. Goldstein and M. Schacher, Norms in central simple algebras, Pacific Journal of Mathematics, Vol. 292, No. 2, 2018, pp. 373-388.


\bibitem{jensen} C.U. Jensen, N. Yui, Polynomials with $D_{p}$ as Galois Group, Journal of  Number Theory  {15}, (1982)
pp. 347-375.

\bibitem{kohel}
D.R. Kohel, {Quaternion algebras,}{ available online at http://www.i2m.univ-amu.fr/perso/david.kohel/alg/doc/AlgQuat.pdf.}

\bibitem{lam}
T. Y. Lam,  Introduction to Quadratic Forms over Fields,
American Mathematical Society, 2004.




\bibitem{Rowen} L. Rowen, D. J. Saltman, Tensor products of division algebras and fields, Journal of Algebra, {394} (2013), pp. 296-309.




\bibitem{Serre} J. P. Serre, Local fields, Graduate Texts in Mathematics 67, Springer, New York, 1979.

\bibitem{vostokov}
S.V. Vostokov, Explicit formulas for the Hilbert symbol, Geometry and Topology Monographs, Vol.3: Invitation to higher local fields,
Part I, section 8, pp 81-89.


\end{thebibliography}
\end{document}